\theoremstyle{plain}
\newtheorem{theorem}{Theorem}[section]
\theoremstyle{definition}
\newtheorem{remark}[theorem]{Remark}
\newcommand{\scL}{\ensuremath{\mathcal{L}}}
\newcommand{\A}{\mathcal{A}}
\newcommand{\C}{\mathbb{C}}
\newcommand{\R}{\mathbb{R}}
\newcommand{\Z}{\mathbb{Z}}
\title{Eulerian polynomials and polynomial congruences}
\author{
Kazuki Iijima\thanks{Department of Mathematics, 
Hokkaido University, North 10, West 8, Kita-ku, 
Sapporo 060-0810, JAPAN 
E-mail: k.iijima1203@gmail.com}, 
Kyouhei Sasaki\thanks{Department of Mathematics, 
Hokkaido University, North 10, West 8, Kita-ku, 
Sapporo 060-0810, JAPAN 
E-mail: kyo.sasaki2308@gmail.com}, \\
Yuuki Takahashi\thanks{Department of Mathematics, 
Hokkaido University, North 10, West 8, Kita-ku, 
Sapporo 060-0810, JAPAN 
E-mail: chldmrhnmzk\_rmhodq@yahoo.co.jp
}, 
Masahiko Yoshinaga\thanks{Department of Mathematics, 
Hokkaido University, North 10, West 8, Kita-ku, 
Sapporo 060-0810, JAPAN 
E-mail: yoshinaga@math.sci.hokudai.ac.jp}}
\date{\today}
\begin{document}
\maketitle

\begin{abstract} 
We prove that the Eulerian polynomial satisfies certain polynomial 
congruences. Furthermore, these congruences characterize the Eulerian 
polynomial. 

\end{abstract}



\section{Introduction}
\label{sec:intro}

The Eulerian polynomial $A_\ell(x)$ ($\ell\geq 1$) was introduced by Euler 
in the study of sums of powers \cite{foa-hist}. 
In this paper, we define the Eulerian polynomial $A_\ell(x)$ 
as the numerator of the rational function 
\begin{equation}
\label{eq:euleriandef}
F_\ell(x)=
\sum_{k=1}^\infty k^\ell x^k=\left(x\frac{d}{dx}\right)^\ell \frac{1}{1-x}
=
\frac{A_\ell (x)}{(1-x)^{\ell +1}}. 
\end{equation}
The first few examples are $A_1(x)=x, A_2(x)=x+x^2, A_3(x)=x+4x^2+x^3, 
A_4(x)=x+11 x^2+11 x^3+x^4$, etc. 
The Eulerian polynomial $A_\ell (x)$ is 
a monic of degree $\ell$ with positive integer coefficients. Write $A_\ell (x)=
\sum_{k=1}^\ell A(\ell , k)x^k$. The coefficient $A(\ell , k)$ is called 
an Eulerian number. 

In 1950s, Riordan \cite{rio-com} discovered a combinatorial interpretation 
of Eulerian numbers in terms of descents and ascents of permutations, and 
Carlitz \cite{car-q} defined $q$-Eulerian numbers. Since 
then Eulerian numbers are actively studied in enumerative combinatorics. 
(See \cite{comtet, st-ec1, pe}.)

Another combinatorial application of the Eulerian polynomial 
was found in the theory of hyperplane arrangements 
\cite{yos-worp, yos-excep}. 
The characteristic polynomial of 
the so-called Linial arrangement \cite{ps-def} can be expressed in terms of 
the root system generalization of Eulerian polynomials introduced by 
Lam and Postnikov \cite{lp-alc2}. 
The comparison of expressions in \cite{ps-def} and \cite{yos-worp} 
yields the following. 

\begin{theorem}
\label{thm:main1intro}
(\cite[Proposition 5.5]{yos-worp})
For $\ell , m\geq 2$, the Eulerian polynomial $A_\ell (x)$ 
satisfies the following 
\begin{equation}
\label{eq:congru}
A_\ell (x^m)\equiv
\left(
\frac{1+x+x^2+\dots+x^{m-1}}{m}
\right)^{\ell +1}
A_\ell (x) 
\mod\ (x-1)^{\ell +1}. 
\end{equation}
\end{theorem}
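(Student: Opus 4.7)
The plan is to reduce (\ref{eq:congru}) to the regularity of a rational function at $x=1$, and then verify that regularity via the Bernoulli expansion.

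Multiply (\ref{eq:congru}) by $m^{\ell+1}$ and write $[m]:=1+x+\cdots+x^{m-1}$. Using the defining identity $A_\ell(y)=(1-y)^{\ell+1}F_\ell(y)$ from (\ref{eq:euleriandef}) and $1-x^m=(1-x)[m]$, one factors the difference of the two sides as
\begin{equation*}
[m]^{\ell+1}A_\ell(x)-m^{\ell+1}A_\ell(x^m)=(1-x)^{\ell+1}\,[m]^{\ell+1}\bigl(F_\ell(x)-m^{\ell+1}F_\ell(x^m)\bigr).
\end{equation*}
Since $(1-x)^{\ell+1}$ already supplies a zero of order $\ell+1$ at $x=1$ and $[m](1)=m\neq 0$, divisibility by $(x-1)^{\ell+1}$ is equivalent to the statement that the rational function $F_\ell(x)-m^{\ell+1}F_\ell(x^m)$ is regular at $x=1$.

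For regularity, I would change variable to $s$ via $x=e^s$. Under this substitution $x\,d/dx=d/ds$, so by (\ref{eq:euleriandef}) one has $F_\ell(e^s)=(d/ds)^\ell\bigl[1/(1-e^s)\bigr]$. Inserting the Bernoulli expansion $1/(e^s-1)=1/s+\sum_{n\geq 0}B_{n+1}s^n/(n+1)!$ and differentiating $\ell$ times yields
\begin{equation*}
F_\ell(e^s)=\frac{(-1)^{\ell+1}\ell!}{s^{\ell+1}}+R_\ell(s),\qquad R_\ell(s)\in\mathbb{Q}[[s]].
\end{equation*}
The decisive point is that the polar part is the \emph{single} monomial $(-1)^{\ell+1}\ell!/s^{\ell+1}$---no intermediate negative powers of $s$ appear---because $1/(e^s-1)$ has only a simple pole. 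Substituting $s\mapsto ms$ rescales that pole by $m^{-\ell-1}$, which the prefactor $m^{\ell+1}$ exactly compensates, so
\begin{equation*}
F_\ell(e^s)-m^{\ell+1}F_\ell(e^{ms})=R_\ell(s)-m^{\ell+1}R_\ell(ms)\in\mathbb{Q}[[s]]
\end{equation*}
has no pole at $s=0$. Since $x\mapsto e^s$ is a local analytic isomorphism at $s=0$, this means $F_\ell(x)-m^{\ell+1}F_\ell(x^m)$ is regular at $x=1$, completing the reduction.

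I do not foresee a serious obstacle: the entire proof hinges on the single-pole structure of $1/(e^s-1)$, which is classical, and every remaining step is a routine algebraic manipulation.
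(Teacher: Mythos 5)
Your proof is correct, and it shares its first step with the paper's direct proof in \S\ref{sec:direct} but diverges at the decisive point. Both arguments use $A_\ell(y)=(1-y)^{\ell+1}F_\ell(y)$ and $1-x^m=(1-x)(1+x+\cdots+x^{m-1})$ to factor the difference of the two sides of (\ref{eq:congru}) as $(1-x)^{\ell+1}$ times a unit at $x=1$ times $F_\ell(x)-m^{\ell+1}F_\ell(x^m)$, reducing the theorem to the absence of a pole of this rational function at $x=1$. The paper then argues globally and algebraically with roots of unity: by (\ref{eq:cycl}), $m^{\ell+1}F_\ell(x^m)-F_\ell(x)=\sum_{k\geq 1}\bigl(\sum_{j=1}^{m-1}\zeta_m^{jk}\bigr)k^\ell x^k$, and multiplying by $(1+x+\cdots+x^{m-1})^{\ell+1}=\prod_{i=1}^{m-1}(1-\zeta_m^ix)^{\ell+1}$ produces the explicit polynomial $\sum_{j=1}^{m-1}\bigl(\prod_{i\neq j}(1-\zeta_m^ix)^{\ell+1}\bigr)A_\ell(\zeta_m^jx)$, so all poles of $F_\ell(x)-m^{\ell+1}F_\ell(x^m)$ sit at the nontrivial $m$-th roots of unity, in particular not at $x=1$. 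You argue locally and analytically at $x=1$ instead: setting $x=e^s$ turns $x\,d/dx$ into $d/ds$, the simple pole of $1/(1-e^s)$ forces the polar part of $F_\ell(e^s)$ to be the single term $(-1)^{\ell+1}\ell!/s^{\ell+1}$, and the prefactor $m^{\ell+1}$ exactly compensates its rescaling under $s\mapsto ms$; this transfers back to $x=1$ because $1-e^s$ is $s$ times a unit of $\Q[[s]]$, so pole orders at $x=1$ and at $s=0$ agree, as you correctly invoke. The paper's route buys more information --- an explicit closed form for the quotient polynomial and the location of every pole, obtained by purely formal power-series manipulation --- while your route buys a transparent conceptual explanation of why the exponent $\ell+1$ and the constant $m^{\ell+1}$ are forced (they are exactly what is needed to cancel an order-$(\ell+1)$ pole under $x\mapsto x^m$ at its fixed point $x=1$), and it ties the congruence to the Bernoulli expansions already recorded in \S\ref{sec:review}.
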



The purpose of this paper is two-fold. 
First, we give a direct and 
simpler proof of Theorem \ref{thm:main1intro}. 
Second, we prove the converse of the above theorem. 
Namely, the congruence (\ref{eq:congru}) characterizes the Eulerian 
polynomial as follows. 

\begin{theorem}
\label{thm:main2intro}
Let $f(x)$ be a monic of degree $\ell$. Then, $f(x)=A_\ell(x)$ if and only if 
the congruence (\ref{eq:congru}) holds for some $m\geq 2$. 
(See Theorem \ref{thm:main2proof}.) 
\end{theorem}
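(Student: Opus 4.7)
The ``only if'' direction is Theorem \ref{thm:main1intro}. For ``if'', the plan is to reformulate the congruence analytically, reduce it via partial fractions to a triangular linear system on the principal part of $f(x)/(1-x)^{\ell+1}$ at $x=1$, and then solve. Set $p(x):=(1+x+\cdots+x^{m-1})/m$, so that $1-x^m=m(1-x)p(x)$ and $p(1)=1$, and put $G_f(x):=f(x)/(1-x)^{\ell+1}$. A direct manipulation gives
\[
\frac{f(x^m)-p(x)^{\ell+1}f(x)}{(1-x)^{\ell+1}} \;=\; m^{\ell+1}p(x)^{\ell+1}\bigl[G_f(x^m)-m^{-(\ell+1)}G_f(x)\bigr].
\]
Since $p(1)\neq 0$, and since $p(x)^{\ell+1}$ has zeros of order $\ell+1$ at the remaining $m$-th roots of unity (which is exactly enough to cancel the poles of $G_f(x^m)$ there), the congruence (\ref{eq:congru}) for $f$ is equivalent to the single requirement that $G_f(x^m)-m^{-(\ell+1)}G_f(x)$ be regular at $x=1$.

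Since $\deg f\leq \ell$, the rational function $G_f$ has its only pole at $x=1$ and admits the partial-fraction decomposition
\[
G_f(x) = \sum_{k=1}^{\ell+1}\frac{\alpha_k}{(1-x)^k}, \qquad f(x) = \sum_{k=1}^{\ell+1}\alpha_k(1-x)^{\ell+1-k}.
\]
Comparing the coefficient of $x^\ell$ in the second expression shows that $f$ is monic of degree $\ell$ iff $\alpha_1=(-1)^\ell$, and $\alpha_{\ell+1}=f(1)$. Writing $(1-x^m)^{-k}=m^{-k}p(x)^{-k}(1-x)^{-k}$ and expanding $p(x)^{-k}=1+q_{k,1}(1-x)+q_{k,2}(1-x)^2+\cdots$ as a power series at $x=1$, the regularity condition amounts, for each $i=1,\dots,\ell+1$, to the vanishing of the coefficient of $(1-x)^{-i}$ in
\[
\sum_{k=1}^{\ell+1}\frac{\alpha_k}{(1-x)^k}\!\left[\frac{1}{m^k p(x)^k}-\frac{1}{m^{\ell+1}}\right].
\]
The case $i=\ell+1$ is automatic, while for $i=1,\dots,\ell$ the equation reads
\[
\alpha_i\!\left(\frac{1}{m^i}-\frac{1}{m^{\ell+1}}\right)+\sum_{k=i+1}^{\ell+1}\frac{q_{k,k-i}}{m^k}\,\alpha_k = 0.
\]

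These $\ell$ equations form a triangular system in the unknowns $\alpha_\ell,\alpha_{\ell-1},\dots,\alpha_1$ (with $\alpha_{\ell+1}$ as free parameter), whose diagonal entries $m^{-i}-m^{-(\ell+1)}$ are nonzero for $m\geq 2$ and $1\leq i\leq \ell$; hence each $\alpha_k$ with $k\leq \ell$ is determined as an explicit linear function of $\alpha_{\ell+1}$. By Theorem \ref{thm:main1intro} the Eulerian polynomial $A_\ell$ is itself a solution, with $\alpha_{\ell+1}=A_\ell(1)=\ell!$ and $\alpha_1=(-1)^\ell$, so the resulting linear map $\alpha_{\ell+1}\mapsto \alpha_1$ is multiplication by the nonzero scalar $(-1)^\ell/\ell!$. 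The monic constraint $\alpha_1=(-1)^\ell$ therefore forces $\alpha_{\ell+1}=\ell!$ and, via the triangular system, $\alpha_k$ agrees with the corresponding residue of $A_\ell/(1-x)^{\ell+1}$ for every $k$; hence $f=A_\ell$. The main technical task is the bookkeeping in the partial-fraction expansion; the non-degeneracy of the system reduces to the elementary observation that $m^i\neq m^{\ell+1}$ for $m\geq 2$ and $i<\ell+1$.
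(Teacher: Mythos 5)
Your proof is correct, and it takes a genuinely different route from the paper's. Both arguments start from the same reduction---dividing the congruence by $(1-x^m)^{\ell+1}$ so that the relevant object becomes $G_f(x^m)-m^{-(\ell+1)}G_f(x)$, which is exactly the left-hand side of the paper's equation (\ref{eq:gfrac})---but they diverge immediately afterwards. The paper works globally with power series in $x$: it decomposes the right-hand side of (\ref{eq:gfrac}) into partial fractions over the nontrivial $m$-th roots of unity $\zeta_m^j$, encodes the series coefficients by polynomials $\alpha(k)$ and $\beta_j(k)$, compares coefficients along each residue class mod $m$, upgrades those identities along arithmetic progressions to polynomial identities, and sums them to obtain the functional equation $\alpha(t/m)=m^{-\ell}\alpha(t)$, whose only polynomial solutions are $c_0t^\ell$; this identifies $f=c_0A_\ell$ directly from the definition (\ref{eq:euleriandef}), without ever using the forward direction. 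You instead work entirely locally at $x=1$: the congruence becomes a regularity statement, the Laurent coefficients at $x=1$ give a finite triangular linear system in the Taylor coefficients of $f$ at $1$, so the solution space is a line parametrized by $\alpha_{\ell+1}=f(1)$; you then invoke Theorem \ref{thm:main1intro} to exhibit $A_\ell$ as a monic (hence nonzero) solution spanning that line, and the normalization $\alpha_1=(-1)^\ell$ pins down $f=A_\ell$. Your version is more elementary---no roots of unity, no passage from coefficient identities to polynomial identities---and it makes the one-dimensionality of the solution space transparent; the price is that it is not self-contained, since it needs the already-proved direction to produce the spanning solution, whereas the paper's functional-equation argument re-derives the Eulerian polynomial from scratch. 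A minor simplification: you do not actually need the value $A_\ell(1)=\ell!$; once $A_\ell$ is known to solve the system, the relation $\alpha_1=\lambda\,\alpha_{\ell+1}$ together with $\alpha_1^{A}=(-1)^\ell\neq 0$ already forces $\lambda\neq 0$, which is all the uniqueness step requires.
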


The remainder of this paper is organized as follows. 
After recalling classical results on Eulerian polynomials in 
\S \ref{sec:review}, 
we briefly describe 
in \S \ref{sec:background} 
the proof of the congruence (\ref{eq:congru}) in \cite{yos-worp} that 
is based on the expression of 
characteristic polynomials of Linial hyperplane arrangements. 
In \S \ref{sec:direct}, we give a direct proof of the congruence. 
In \S \ref{sec:char}, we give the proof of Theorem \ref{thm:main2intro}. 

\begin{remark}
The right-hand side of (\ref{eq:congru}) is discussed also in 
\cite[Proposition 2.5]{st-za}. 
\end{remark}

\section{Brief review of Eulerian polynomials}
\label{sec:review}

In this section, we recall classical results on the Eulerian polynomial 
and the Eulerian numbers $A(\ell , k)$. 
By definition (\ref{eq:euleriandef}), 
the Eulerian polynomial $A_\ell (x)$ satisfies the relation 
\[
\frac{A_{\ell}(x)}{(1-x)^{\ell +1}}=
x\frac{d}{dx}
\frac{A_{\ell -1}(x)}{(1-x)^{\ell }}, 
\]
which yields the following recursive relation. 
\begin{equation}
\label{eq:recursive}
A(\ell , k)=k\cdot A(\ell -1, k)+(\ell -k+1)\cdot A(\ell -1, k-1). 
\end{equation}

Consider the coordinate change $w=\frac{1}{x}$. Then, the Euler 
operator is transformed as 
$x\frac{d}{dx}=-w\frac{d}{dw}$. The direct computation using the relation 
$\frac{1}{1-\frac{1}{w}}=1-\frac{1}{1-w}$ yields 
$x^{\ell +1}A_\ell (\frac{1}{x})=A_\ell (x)$. 
Equivalently, $A(\ell , k)=A(\ell , \ell +1-k)$. 

Definition (\ref{eq:euleriandef}) is also equivalent to 
\begin{equation}
\label{eq:poly}
A_\ell (x)=(1-x)^{\ell +1}
\cdot\sum_{k=0}^\infty k^\ell x^k
\end{equation}
and 
\begin{equation}
\label{eq:power}
\sum_{k=0}^\infty k^\ell x^k 
=
A_\ell(x)\cdot 
\sum_{k=0}^{\infty}(-x)^k
\binom{-\ell -1}{k}. 
\end{equation}
Then, (\ref{eq:poly}) yields 
\begin{equation}
\label{eq:eulerianbinom}
A(\ell , k)=\sum_{j=0}^{k}(-1)^j\binom{\ell +1}{j}(k-j)^\ell , 
\end{equation}
and (\ref{eq:power}) yields 
\begin{equation}
\label{eq:numworp}
k^\ell =\sum_{j=1}^\ell A(\ell , j)\binom{k+\ell -j}{\ell }. 
\end{equation}
Note that both sides of (\ref{eq:numworp}) are polynomials of 
degree $\ell $ in $k$, and it holds for any $k>0$. Hence the equality 
holds at the level of polynomials in $t$. Thus, we have 
\begin{equation}
\label{eq:polyworp}
t^\ell =\sum_{j=1}^\ell A(\ell , j)\binom{t+\ell -j}{\ell }, 
\end{equation}
which is called the Worpitzky identity \cite{wor}. 
Using the shift operator $S:t\longmapsto t-1$ 
(see \S \ref{sec:background}), 
the Worpitzky identity can be written as 
\begin{equation}
\label{eq:worp}
t^\ell =A_\ell (S)
\binom{t+\ell }{\ell }. 
\end{equation}
Next, we consider 
exponential generating series of $A_\ell(x)$, and describe relations 
with Bernoulli numbers. First, using (\ref{eq:poly}), we have 
\begin{equation}
\label{eq:expgen}
\begin{split}
\sum_{\ell=0}^\infty
\frac{A_\ell(x)}{\ell !}t^\ell
&=
\sum_{\ell=0}^\infty
\frac{t^\ell}{\ell !}
(1-x)^{\ell+1}\sum_{n=0}^\infty n^\ell x^n
\\
&=
(1-x)\sum_{n=0}^\infty x^ne^{nt(1-x)}\\
&=
\frac{1-x}{1-xe^{t(1-x)}}. 
\end{split}
\end{equation}
Replacing $x$ by $-1$ in (\ref{eq:expgen}), we have 
\begin{equation}
\label{eq:A-1}
\sum_{\ell=0}^\infty
\frac{A_\ell(-1)}{\ell !}t^\ell
=
\frac{2}{1+e^{2t}}. 
\end{equation}
Recall that the Bernoulli polynomial $B_\ell(x)$ is defined by 
\begin{equation}
\label{eq:Bpoly}
\sum_{\ell=0}^\infty
\frac{B_\ell(x)}{\ell !}t^\ell
=\frac{te^{xt}}{e^t-1}, 
\end{equation}
($B_0(x)=1, B_1(x)=x-\frac{1}{2}, B_2(x)=x^2-x+\frac{1}{6}, 
B_3(x)=x^3-\frac{3}{2}x^2+\frac{1}{2}x, 
B_4(x)=x^4-2x^3+x^2-\frac{1}{30}, \cdots$) 
and the constant term $B_\ell(0)$ is called the Bernoulli number. 
Replacing $x$ by $0$ and $t$ by $at$ with $a\in\mathbb{C}$ 
in (\ref{eq:Bpoly}), we have 
\begin{equation}
\label{eq:B0}
\sum_{\ell=0}^\infty
\frac{B_\ell(0)}{\ell !}(at)^\ell
=\frac{at}{e^{at}-1}. 
\end{equation}
Using the identity 
$\frac{2t}{e^{2t}+1}=\frac{2t}{e^{2t}-1}-\frac{4t}{e^{4t}-1}$, 
the Bernoulli number $B_\ell(0)$ can be expressed as 
\begin{equation}
\label{eq:Bnumber}
B_\ell(0)=\frac{\ell}{2^\ell(1-2^\ell)} A_{\ell-1}(-1). 
\end{equation}
There is another relation between Eulerian polynomials and Bernoulli 
polynomials. Let $\ell>0$. 
Using (\ref{eq:numworp}) and the famous formula 
$\sum_{x=0}^{N-1}x^\ell=\frac{B_{\ell+1}(N)-B_{\ell+1}(0)}{\ell+1}$, 
we have 
\begin{equation}
\label{eq:B2nd}
B_{\ell+1}(N)-B_{\ell+1}(0)=
(\ell+1)\cdot\sum_{k=1}^\ell A(\ell, k)
\binom{\ell+N-k}{\ell+1}.
\end{equation}
With the shift operator $S$, (\ref{eq:B2nd}) can also be 
expressed as 
\begin{equation}
B_{\ell+1}(t)-B_{\ell+1}(0)=
(\ell+1) A_\ell(S)\binom{t+\ell}{\ell+1}. 
\end{equation}
(This formula appeared in \cite[page 209]{wor} as ``The second form of 
Bernoulli function.'')

\section{Background on hyperplane arrangements}
\label{sec:background}

In this section, we recall the proof of the congruence (\ref{eq:congru}) 
presented in \cite{yos-worp}. 

Let $\A=\{H_1, \dots, H_k\}$ be a finite set of affine hyperplanes in 
a vector space $V$. 
We denote the set of all intersections of $\A$ by 
$L(\A)=\{\cap S\mid S\subset \A\}$. 
The set $L(\A)$ is partially ordered by 
reverse inclusion, which has a unique minimal element 
$\hat{0}=V$. 
The characteristic polynomial of $\A$ is 
defined by 
\[
\chi(A, q)=\sum_{X\in L(\A)}\mu(X)q^{\dim X}, 
\]
where $\mu$ is the M\"obius function on $L(\A)$, defined by 
\[
\mu(X)=
\left\{
\begin{array}{ll}
1, & \mbox{ if } X=\hat{0}\\
-\sum_{Y<X}\mu(Y), & \mbox{ otherwise.} 
\end{array}
\right.
\]
Let $V=\{(x_0, x_1, \dots, x_n)\in\R^{\ell+1}\mid \sum x_i=0\}\subset
\R^{\ell+1}$. 
For integers $0\leq i<j\leq \ell$ and $s\in\Z$, 
denote by $H_{ij,s}$ the affine hyperplane 
$\{(x_0, \dots, x_\ell)\in V\mid x_i-x_j=s\}$. 

Let $m\geq 1$ be a positive integer. 
The arrangement 
\[
\scL^m=\{H_{ij,s}\mid 0\leq i<j\leq \ell, 1\leq s\leq m\}
\]
is called the (extended) Linial arrangement (of type $A_\ell$). 
The Linial arrangement has 
several intriguing enumerative properties \cite{ps-def}. 
Postnikov and Stanley \cite{ps-def} (see also \cite{ath-lin}) 
gave the following expression for 
the characteristic polynomial $\chi(\scL^m, t)$. 
\begin{equation}
\label{eq:post}
\chi(\scL^m, t)=
\left(
\frac{1+S+S^2+\dots+S^m}{m+1}
\right)^{\ell+1}t^\ell, 
\end{equation}
where $S$ acts on a function $f(t)$ by $Sf(t)=f(t-1)$ (naturally 
$S^kf(t)=f(t-k)$) as the shift operator. 
Using the Worpitzky identity (\ref{eq:worp}), (\ref{eq:post}) can be 
written as 
\begin{equation}
\label{eq:post2}
\chi(\scL^m, t)=
\left(
\frac{1+S+S^2+\dots+S^m}{m+1}
\right)^{\ell+1}A_\ell(S)
\binom{t+\ell}{\ell}. 
\end{equation}
On the other hand, using the lattice points 
interpretation of the Worpitzky identity, the following formula was obtained 
in \cite{yos-worp} 
\begin{equation}
\label{eq:yoswor}
\chi(\scL^m, t)=
A_\ell(S^{m+1})
\binom{t+\ell}{\ell}. 
\end{equation}
The formulas (\ref{eq:post2}) and (\ref{eq:yoswor}) imply that 
the operator 
\begin{equation}
\label{eq:divis}
\left(
\frac{1+S+S^2+\dots+S^m}{m+1}
\right)^{\ell+1}A_\ell(S)
-
A_\ell(S^{m+1})
\end{equation}
annihilates the polynomial 
$
\binom{t+\ell}{\ell}
$
of degree $\ell$, which means that (\ref{eq:divis}) is divisible by 
$(S-1)^{\ell+1}$ 
(see \cite[Prop. 2.8]{yos-worp}). Hence the congruence 
(\ref{eq:congru}) follows. 

\section{Direct proof of the congruence}
\label{sec:direct}

\subsection{Special case: $m=2$}

We first handle the case $m=2$. 
By considering $F_\ell(x)+F_\ell(-x)$, 
it is easily seen that 
the formal power series $F_\ell(x)=\sum_{k=1}^\infty k^\ell x^k$ satisfies 
\begin{equation}
\label{eq:cancel}
F_\ell(x)-2^{\ell+1}F_\ell(x^2)=-F_\ell(-x). 
\end{equation}
Using the Eulerian polynomial, (\ref{eq:cancel}) can be written as 
\begin{equation}
\label{eq:cancelEulerian}
\left(1+x \right)^{\ell+1}\cdot A_\ell(x)-
2^{\ell+1}\cdot A_\ell(x^2)=
-
\left(1-x \right)^{\ell+1}\cdot A_\ell(-x), 
\end{equation}
which implies the congruence (\ref{eq:congru}) for $m=2$. 

\begin{remark}
Substituting formally $x=1$ into (\ref{eq:cancel}), 
we obtain the formula 
``$F_\ell(1)=\frac{A_\ell(-1)}{2^{\ell+1}(2^{\ell+1}-1)}$.'' 
Then, (\ref{eq:Bnumber}) implies 
``$F_\ell(1)=-\frac{B_{\ell+1}(0)}{\ell+1}$'', which gives  the correct 
value $\zeta(-\ell)=-\frac{B_{\ell+1}(0)}{\ell+1}$ of the Riemann zeta 
function for $\ell\geq 1$. 
\end{remark}

\subsection{General case}

Let $m\geq 2$. 
Denote by $\zeta_m=e^{2\pi\sqrt{-1}/m}$ the primitive $m$-th root of unity. 
We will use the following fact 
\begin{equation}
\label{eq:cycl}
\sum_{j=1}^{m-1}\zeta_m^{jk}=
\left\{
\begin{array}{cl}
m-1,&\mbox{ if $m|k$,}\\
-1,&\mbox{ if $m\not |k$,} 
\end{array}
\right.
\end{equation}
for $k\in\Z$. 

Using definition (\ref{eq:euleriandef}) (or (\ref{eq:poly})), 
the polynomial 
$A_\ell (x^m)-
\left(
\frac{1+x+x^2+\dots+x^{m-1}}{m}
\right)^{\ell +1}
A_\ell (x) $
can be expressed as 
\begin{align*}
&(1-x^m)^{\ell+1}\sum_{k=1}^\infty k^\ell x^{mk}-
\left(
\frac{1+x+x^2+\dots+x^{m-1}}{m}
\right)^{\ell +1}(1-x)^{\ell+1}\sum_{k=1}^\infty k^\ell x^k
\\
&\qquad = 
\left(\frac{1-x^m}{m}\right)^{\ell+1}\cdot 
\left\{m\cdot\sum_{k=1}^\infty(mk)^\ell x^{mk}-\sum_{k=1}^\infty k^\ell x^k
\right\}. 
\end{align*}
It is enough to show that 
\begin{equation}
\label{eq:tobepoly}
P(x):=(1+x+\cdots+x^{m-1})^{\ell+1}
\left\{m\cdot\sum_{k=1}^\infty(mk)^\ell x^{mk}-\sum_{k=1}^\infty k^\ell x^k
\right\} 
\end{equation}
becomes a polynomial. Applying (\ref{eq:cycl}), we have 
\[
\begin{split}
P(x)
&=(1+x+\cdots+x^{m-1})^{\ell+1}
\sum_{k=1}^\infty\sum_{j=1}^{m-1}\zeta_m^{jk}k^\ell x^k
\\
&=
\prod_{i=1}^{m-1}(1-\zeta_m^i x)^{\ell+1}\cdot
\sum_{k=1}^\infty\sum_{j=1}^{m-1}k^\ell (\zeta_m^{j}x)^k\\
&=
\sum_{j=1}^{m-1}
\left(
\prod_{\substack{1\leq i\leq m-1\\i\neq j}}(1-\zeta_m^i x)^{\ell+1}
\right)\cdot
A_\ell(\zeta_m^jx), 
\end{split}
\]
which is a polynomial in $x$. 
This completes the proof of Theorem \ref{thm:main1intro}. 

\begin{remark}
The congruence (\ref{eq:congru}) is not optimal when $\ell$ is even. 
Indeed, if $\ell$ is even, the congruence (\ref{eq:congru}) holds 
modulo $(1-x)^{\ell+2}$, which follows from the symmetry 
$A(\ell, k)=A(\ell, \ell+1-k)$ and $A_\ell(-1)=0$. 
\end{remark}

\section{A characterization of the Eulerian polynomial}

\label{sec:char}

In this section, we prove the following. 

\begin{theorem}
\label{thm:main2proof}
Let $f(x)=x^\ell+a_1x^{\ell-1}+\cdots+a_\ell\in\C[x]$ be a monic 
complex polynomial of degree $\ell>0$. Then, the following are equivalent. 
\begin{itemize}
\item[$(a)$] 
$f(x)=A_\ell(x)$. 
\item[$(b)$] 
For any $m\geq 2$, $f(x)$ satisfies the congruence (\ref{eq:congru}). 
Namely, 
\begin{equation}
\label{eq:congruf}
f(x^m)\equiv
\left(
\frac{1+x+\cdots+x^{m-1}}{m}
\right)^{\ell+1}f(x)\mod (1-x)^{\ell+1}
\end{equation}
is satisfied. 
\item[$(c)$] 
The congruence for $m=2$ holds, namely, 
\[
f(x^2)\equiv
\left(
\frac{1+x}{2}
\right)^{\ell+1}f(x)\mod (1-x)^{\ell+1}
\]
is satisfied. 
\item[$(d)$] 
There exists an integer $m\geq 2$ such that the congruence 
(\ref{eq:congruf}) holds. 
\end{itemize}
\end{theorem}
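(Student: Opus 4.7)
The equivalences reduce to the single nontrivial direction (c)$\Rightarrow$(a): (a)$\Rightarrow$(b) is Theorem \ref{thm:main1intro}, proved directly in \S\ref{sec:direct}, while (b)$\Rightarrow$(c), (b)$\Rightarrow$(d), and (d)$\Rightarrow$(c) are just specializations of the value of $m$. My plan for (c)$\Rightarrow$(a) is a linear algebra argument in the basis $\{(1-x)^k\}_{0\le k\le \ell}$ of $\C[x]_{\le\ell}$.

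Fix an integer $m\ge 2$ and introduce the $\C$-linear map
\[
\tilde{T}_m \colon \C[x]_{\le \ell} \longrightarrow \C[x]/(1-x)^{\ell+1}, \quad f \longmapsto \left(f(x^m) - \left(\tfrac{1+x+\cdots+x^{m-1}}{m}\right)^{\ell+1} f(x)\right) \bmod (1-x)^{\ell+1}.
\]
A monic polynomial $f$ of degree $\ell$ satisfies (\ref{eq:congruf}) precisely when $f \in \ker \tilde{T}_m$, and Theorem \ref{thm:main1intro} provides $A_\ell \in \ker \tilde{T}_m$. Consequently, once $\dim_\C \ker \tilde{T}_m = 1$ is established, the kernel must be the line $\C\cdot A_\ell$, and comparing leading coefficients then forces $f = A_\ell$.

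To evaluate $\tilde T_m$ on the basis $\{(1-x)^k\}$, I apply the identity $(1-x)(1+x+\cdots+x^{m-1})=1-x^m$ to $f(x^m)=(1-x^m)^k$; after factoring out the common $(1-x)^k$ from both terms, one obtains
\[
\tilde{T}_m\bigl((1-x)^k\bigr) = (1-x)^k \left(\tfrac{1+x+\cdots+x^{m-1}}{m}\right)^k \left[m^k - \left(\tfrac{1+x+\cdots+x^{m-1}}{m}\right)^{\ell+1-k}\right].
\]
Since $\tfrac{1+x+\cdots+x^{m-1}}{m}$ takes the value $1$ at $x=1$, for $k\ge 1$ this image has $(1-x)$-adic order exactly $k$ with leading coefficient $m^k-1$, whereas $\tilde T_m(1)$ lies in the ideal $((1-x))$. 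In the $\{(1-x)^k\}$-basis, the matrix of $\tilde T_m$ therefore has its $(1-x)^0$-row identically zero, while the submatrix indexed by rows and columns $1,\ldots,\ell$ is lower-triangular with diagonal entries $m-1,\,m^2-1,\ldots,\,m^\ell-1$, all nonzero because $m\ge 2$. Hence $\tilde T_m$ has rank exactly $\ell$, and $\dim_\C \ker \tilde T_m = (\ell+1)-\ell = 1$, completing the argument.

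The main obstacle is producing the factored expression for $\tilde T_m((1-x)^k)$; once this is in hand the triangular structure and the nonvanishing of its diagonal are transparent and the rank count is routine. It is reassuring that the hypothesis $m\ge 2$ enters exactly where needed: precisely for $m=1$ the diagonal entries $m^k-1$ vanish and the congruence (\ref{eq:congruf}) is trivially satisfied, so no characterization of $A_\ell$ could possibly hold in that regime.
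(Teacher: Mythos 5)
Your proof is correct, but it follows a genuinely different route from the paper's. The paper handles the one nontrivial implication $(c)\Rightarrow(a)$ analytically: starting from the exact identity (\ref{eq:g}), it divides by $(1-x^m)^{\ell+1}$, expands the error term in partial fractions over the $m$-th roots of unity, passes to the polynomials $\alpha,\beta_j$ whose values are the power-series coefficients as in (\ref{eq:Rj}), and compares coefficients along residue classes modulo $m$ to obtain the functional equation $\alpha(t/m)=m^{-\ell}\alpha(t)$, which forces $\alpha(t)=c_0t^\ell$ and hence $f=c_0A_\ell$ by the definition (\ref{eq:euleriandef}). You instead run a rank--nullity argument: your formula for $\tilde T_m\bigl((1-x)^k\bigr)$ is correct (it is $(1-x^m)^k-u^{\ell+1}(1-x)^k$ with $u=(1+x+\cdots+x^{m-1})/m$, which factors as you display), the $(1-x)$-adic valuation of the image is exactly $k$ with leading coefficient $m^k-1\neq 0$ for $1\le k\le\ell$, the top row vanishes because $u(1)=1$, and so $\operatorname{rank}\tilde T_m=\ell$ and $\dim\ker\tilde T_m=1$; since Theorem \ref{thm:main1intro} places $A_\ell$ in the kernel, monicity forces $f=A_\ell$. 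The trade-off is this: your argument is shorter and more elementary --- no partial fractions, no facts about rational generating functions --- and it makes transparent exactly where $m\ge 2$ enters (the nonvanishing of the diagonal entries $m^k-1$); on the other hand, it must invoke Theorem \ref{thm:main1intro} to identify the one-dimensional kernel, whereas the paper's computation is independent of that theorem (it rederives that kernel elements are scalar multiples of $A_\ell$ directly) and exposes a structural explanation for the characterization, namely the self-similarity $\alpha(t/m)=m^{-\ell}\alpha(t)$ of the counting polynomial $\alpha(k)=c_0k^\ell$, which ties the result back to the Worpitzky/Ehrhart-theoretic picture underlying the rest of the paper.
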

\begin{proof}
$(a)\Longrightarrow (b)$ is nothing but Theorem \ref{thm:main1intro}. 
The implications $(b)\Longrightarrow (c)\Longrightarrow (d)$ are obvious. 

Let us assume $(d)$. We shall prove $(a)$. Choose an integer $m\geq 2$ 
such that (\ref{eq:congruf}) is satisfied. There exists a polynomial 
$g(x)\in\C[x]$ that satisfies 
\begin{equation}
\label{eq:g}
f(x^m)-
\left(
\frac{1+x+\cdots+x^{m-1}}{m}
\right)^{\ell+1}\cdot
f(x)=(1-x)^{\ell+1}\cdot
g(x). 
\end{equation}
Note that $\deg g=m\ell+m-\ell-2<(m-1)(\ell+1)$. Dividing this equation 
by $(1-x^m)^{\ell+1}$, we have 
\begin{equation}
\label{eq:gfrac}
\frac{f(x^m)}{(1-x^m)^{\ell+1}}-
\frac{1}{m^{\ell+1}}\cdot
\frac{f(x)}{(1-x)^{\ell+1}}=
\frac{g(x)}{(1+x+\cdots+x^{m-1})^{\ell+1}}. 
\end{equation}
We expand $\frac{g(x)}{(1+\cdots+x^{m-1})^{\ell+1}}$ into partial fraction, 
\begin{equation}
\label{eq:partialfrac}
\frac{g(x)}{(1+\cdots+x^{m-1})^{\ell+1}}=
\sum_{j=1}^{m-1}\frac{R_j(x)}{(1-\zeta_m^j x)^{\ell+1}}, 
\end{equation}
where $R_j(x)\in\C[x]$ with $\deg R_j(x)\leq\ell$. 
As is well known in the theory of formal power series \cite{st-ec1}, 
there exist polynomials $\alpha(t), \beta_j(t)\in\C[t]$ 
with $\deg\alpha(t), \deg\beta_j(t)\leq \ell$ 
such that 
\begin{equation}
\label{eq:Rj}
\frac{f(x)}{(1-x)^{\ell+1}}=\sum_{k=0}^\infty\alpha(k)x^k, \mbox{ and }
\frac{R_j(\zeta_m^{-j}x)}{(1-x)^{\ell+1}}=\sum_{k=0}^\infty\beta_j(k)x^k. 
\end{equation}
Then, the right-hand side of (\ref{eq:gfrac}) is 
\begin{equation}
\label{eq:rhs}
\begin{split}
\mbox{RHS of (\ref{eq:gfrac})}
&=
\sum_{j=1}^{m-1}\sum_{k\geq 0}\beta_j(k)\zeta_m^{jk}x^k
\\
&=
\sum_{j=1}^{m-1}\sum_{r=0}^{m-1}\sum_{q=0}^\infty
\beta_j(qm+r)\zeta_m^{j(qm+r)}x^{qm+r}\\
&=
\sum_{q=0}^\infty
\sum_{r=0}^{m-1}
\left(
\sum_{j=1}^{m-1}
\zeta_m^{jr}\beta_j(qm+r)\right)x^{qm+r}. 
\end{split}
\end{equation}
On the other hand, 
the left-hand side of (\ref{eq:gfrac}) is 
\begin{equation}
\label{eq:lhs}
\begin{split}
\mbox{LHS of (\ref{eq:gfrac})}
&=
\sum_{k\geq 0}\alpha(k)x^{km}-\frac{1}{m^{\ell+1}}
\sum_{k\geq 0}\alpha(k)x^{k}
\\
&=
\sum_{q=0}^\infty
\left(\alpha(q)-\frac{1}{m^{\ell+1}}\alpha(mq)\right)x^{mq}-
\sum_{q=0}^\infty\sum_{r=1}^{m-1}\frac{\alpha(mq+r)}{m^{\ell+1}}x^{mq+r}. 
\end{split}
\end{equation}
Comparison of (\ref{eq:rhs}) and (\ref{eq:lhs}) gives 
\begin{equation}
\label{eq:qmr}
\begin{split}
\sum_{j=1}^{m-1}\beta_j(qm)
&=\alpha(q)-\frac{1}{m^{\ell+1}}\alpha(mq)\\
\sum_{j=1}^{m-1}\beta_j(qm+1)\zeta_m^j
&=-\frac{1}{m^{\ell+1}}\alpha(mq+1)\\
&\vdots\\
\sum_{j=1}^{m-1}\beta_j(qm+m-1)\zeta_m^{j(m-1)}
&=-\frac{1}{m^{\ell+1}}\alpha(mq+m-1), 
\end{split}
\end{equation}
for any $q\geq 0$. Since both sides of (\ref{eq:qmr}) are polynomials 
in $q$, we have the following polynomial identities. 
\begin{equation}
\label{eq:polyid}
\begin{split}
\sum_{j=1}^{m-1}\beta_j(t)
&=\alpha\left(\frac{t}{m}\right)-\frac{1}{m^{\ell+1}}\alpha(t)\\
\sum_{j=1}^{m-1}\beta_j(t)\zeta_m^j
&=-\frac{1}{m^{\ell+1}}\alpha(t)\\
&\vdots\\
\sum_{j=1}^{m-1}\beta_j(t)\zeta_m^{j(m-1)}
&=-\frac{1}{m^{\ell+1}}\alpha(t). 
\end{split}
\end{equation}
By summing up all identities in (\ref{eq:polyid}), we obtain a 
functional equation 
\[
\alpha\left(\frac{t}{m}\right)=\frac{1}{m^\ell}\alpha(t). 
\]
This relation is satisfied only by the polynomial of the form 
$\alpha(t)=c_0\cdot t^\ell$, where $c_0\in\C$. Again comparing 
(\ref{eq:euleriandef}) and (\ref{eq:Rj}), $f(x)=c_0\cdot A_\ell(x)$. 
Since $f(x)$ is a monic, we have $f(x)=A_\ell(x)$. 
\end{proof}

\medskip

\noindent
{\bf Acknowledgements.} 
The authors thank Mr. Rei Yoshida for several discussions about 
the contents of this paper. 
The authors also thank Prof. Richard Stanley for noticing \cite{st-za}. 
M. Y. was partially supported by 
JSPS KAKENHI Grant Number 
JP25400060, 
JP15KK0144, 
and 
JP16K13741. 

\end{document}